\documentclass[12pt,reqno]{amsart} 
\usepackage{amssymb,amscd,url}


\makeatletter
\@namedef{subjclassname@2010}{%
  \textup{2010} Mathematics Subject Classification}
\makeatother

\hyphenation{Car-mi-chael}



\newtheorem{theorem}{Theorem}
\newtheorem{lemma}[theorem]{Lemma}

\newtheorem{proposition}[theorem]{Proposition}
\newtheorem{corollary}[theorem]{Corollary}

\theoremstyle{definition}
\newtheorem*{definition}{Definition}
\newtheorem{example}[theorem]{Example}

\theoremstyle{remark}
\newtheorem{remark}[theorem]{Remark}

\numberwithin{equation}{section}


\frenchspacing

\textwidth=13.5cm
\textheight=23cm
\parindent=16pt
\oddsidemargin=-0.5cm
\evensidemargin=-0.5cm
\topmargin=-0.5cm




%
  {\end{list}}

\newenvironment{parts}[0]{%
  \begin{list}{}%
    {\setlength{\itemindent}{0pt}
     \setlength{\labelwidth}{1.5\parindent}
     \setlength{\labelsep}{.5\parindent}
     \setlength{\leftmargin}{2\parindent}
     \setlength{\itemsep}{0pt}
     }%
   }%
  {\end{list}}
\newcommand{\Part}[1]{\item[\upshape#1]}


\newcommand{\e}{\epsilon}

\newcommand{\D}{\Delta}





\newcommand{\FF}{\mathbb{F}}
\newcommand{\GG}{\mathbb{G}}

\newcommand{\QQ}{\mathbb{Q}}

\newcommand{\ZZ}{\mathbb{Z}}




\newcommand{\End}{\operatorname{End}}

\newcommand{\LCM}{\operatorname{LCM}}
\newcommand{\Lift}{\operatorname{Lift}}

\newcommand{\LS}[2]{{\genfrac{(}{)}{}{}{#1}{#2}}} 
\newcommand{\tLS}[2]{(#1{}|{}#2)} 

\newcommand{\notdivide}{\nmid}

\newcommand{\ord}{\operatorname{ord}}

\newcommand{\Spec}{\operatorname{Spec}}
\newcommand{\tors}{{\textup{tors}}}



\begin{document}


\baselineskip=17pt


\title[Elliptic Carmichael Numbers]
{Elliptic Carmichael Numbers and Elliptic Korselt Criteria}
\date{}

\author[J. H. Silverman]{Joseph H. Silverman}
\address{Mathematics Department, Box 1917\\
         Brown University\\  Providence, RI 02912 USA}
\email{jhs@math.brown.edu}

\subjclass[2010]{Primary: 11G05; Secondary:  11Y11}
\keywords{Carmichael number, pseudoprime, elliptic curve}

\begin{abstract}
Let~$E/\QQ$ be an elliptic curve, let $L(E,s)=\sum a_nn^{-s}$ be the
$L$-series of~$E/\QQ$, and let $P\in E(\QQ)$ be a point.  An
integer~$n>2$ having at least two distinct prime factors will be be
called an \emph{elliptic pseudoprime for $(E,P)$} if~$E$ has good
reduction at all primes dividing~$n$ and $(n+1-a_n)P\equiv
0\pmod{n}$. Then~$n$ is an \emph{elliptic Carmichael number for~$E$}
if~$n$ is an elliptic pseudoprime for every $P\in E(\ZZ/n\ZZ)$. In
this note we describe two elliptic analogues of Korselt's criterion
for Carmichael numbers, and we analyze elliptic Carmichael numbers of
the form~$pq$.
\end{abstract}


\maketitle


\section{Introduction}
\label{section:introduction}

Classically, a composite integer~$n>2$ is called a \emph{pseudoprime
  to the base~$b$} if
\[
  b^{n-1}\equiv 1 \pmod{n}.
\]
A \emph{Carmichael number} is an integer~$n$ that is a pseudoprime to
all bases that are relatively prime to~$n$. Explicit examples of
Carmichael numbers were given by Carmichael~\cite{MR1517641} in~1912,
although the concept had been studied earlier by
Korselt~\cite{korselt1899} in~1899. In particular, Korselt gave the
following elementary criterion for Carmichael numbers, which was
rediscovered by Carmichael.

\begin{proposition}[Korselt's Criterion]
\label{proposition:korselt}
A positive composite number~$n$ is a Carmichael number if and only
if~$n$ is odd, square-free, and every prime~$p$ dividing~$n$ has the
property that $p-1$ divides $n-1$.
\end{proposition}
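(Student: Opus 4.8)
The plan is to prove both implications of the biconditional using the Chinese Remainder Theorem together with Fermat's Little Theorem and the existence of primitive roots modulo primes and prime powers. Throughout, recall that $n$ is a Carmichael number precisely when $b^{n-1}\equiv 1\MOD{n}$ for every $b$ with $\gcd(b,n)=1$.

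First I would dispatch the easier direction $(\Leftarrow)$. Assume $n$ is odd, square-free, and $p-1\mid n-1$ for every prime $p\mid n$, and write $n=p_1\cdots p_k$ with the $p_i$ distinct. By CRT it suffices to check $b^{n-1}\equiv 1\MOD{p_i}$ for each $i$ and each $b$ coprime to $n$. Since $\gcd(b,p_i)=1$, Fermat's Little Theorem gives $b^{p_i-1}\equiv 1\MOD{p_i}$; writing $n-1=(p_i-1)m_i$ via the hypothesis $p_i-1\mid n-1$, we obtain $b^{n-1}=(b^{p_i-1})^{m_i}\equiv 1\MOD{p_i}$. Hence $n$ is a pseudoprime to every base coprime to $n$, i.e.\ a Carmichael number.

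For the harder direction $(\Rightarrow)$, assume $n$ is a Carmichael number. I would first establish square-freeness. If $p^2\mid n$ for some prime $p$, then $(\ZZ/p^2\ZZ)^\times$ is cyclic of order $p(p-1)$, so I would choose, via CRT, a base $b$ coprime to $n$ reducing to a primitive root modulo $p^2$. The Carmichael condition $b^{n-1}\equiv 1\MOD{p^2}$ then forces $p(p-1)\mid n-1$, hence $p\mid n-1$; combined with $p\mid n$ this gives $p\mid\gcd(n,n-1)=1$, a contradiction. Thus $n=p_1\cdots p_k$ is square-free. Next, for each $p_i$ I would use CRT to pick a base $b$ coprime to $n$ reducing to a primitive root $g_i$ modulo $p_i$; since $\ord(g_i)=p_i-1$ in $(\ZZ/p_i\ZZ)^\times$, the congruence $b^{n-1}\equiv 1\MOD{p_i}$ yields $p_i-1\mid n-1$. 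Finally, $n$ being composite and square-free has at least two distinct prime factors, so if $n$ were even one factor would be $2$ and another an odd prime $p$ with $p-1$ even; the divisibility $p-1\mid n-1$ just proved would force $n-1$ even, contradicting that $n$ is even. Hence $n$ is odd.

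The main obstacle is the necessity direction, and within it the square-free step. The key idea is to exploit the nontrivial $p$-part of $(\ZZ/p^2\ZZ)^\times$ by selecting, through CRT, a single base coprime to all of $n$ whose multiplicative order modulo $p^2$ is divisible by $p$, so that the Carmichael congruence propagates the forbidden divisibility $p\mid n-1$. Once the correct bases are produced at each prime (power), the remaining deductions about orders and parity are routine.
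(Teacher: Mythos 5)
Your proof is correct. Note that the paper itself gives no proof of this proposition: it is stated as a classical result attributed to Korselt~\cite{korselt1899} and Carmichael, so there is no internal argument to compare against. Your argument is the standard one: Fermat's Little Theorem plus CRT for sufficiency, and for necessity the selection (again via CRT) of bases reducing to primitive roots modulo $p_i$ to get $p_i-1\mid n-1$, and to a primitive root modulo $p^2$ --- where $(\ZZ/p^2\ZZ)^\times$ is cyclic of order $p(p-1)$ for every prime $p$, including $p=2$ --- to rule out square factors via $p\mid\gcd(n,n-1)$. All steps check out, including the parity argument at the end (indeed, oddness is forced by square-freeness and the divisibility condition alone, so your $(\Leftarrow)$ direction correctly never needs the oddness hypothesis).
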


In~1994, Alford, Granville, and Pomerance~\cite{MR1283874} proved the
long-stand\-ing conjecture that there are infinitely many Carmichael
numbers.

The definitions of pseudoprimes and Carmichael numbers are related to
the orders of numbers in the multiplicative group~$(\ZZ/n\ZZ)^*$.  It
is thus natural to extend these constructions to the setting of other
algebraic groups, for example to elliptic
curves. Gordan~\cite{MR946604} appears to have been the first to
define elliptic pseudoprimes, at least in the setting of elliptic
curves having complex multiplication. See
Remark~\ref{remark:gordanellpsdprm} for a description of Gordan's
definition, which includes a supersingularity condition, and 
for additional references.

In this note we define elliptic pseudoprimes
(Section~\ref{section:ellpseudoprimes}) and elliptic Carmichael
numbers (Section~\ref{section:ellcarmichaelnumbs}) on arbitrary
elliptic curves~$E/\QQ$.  Our definition (mostly) reduces to Gordan's
definition in the CM setting. We give two Korselt-type criteria for
elliptic Carmichael numbers. The first, in
Section~\ref{section:ellkorseltnumbs}, only goes one direction
(Korselt implies Carmichael), but is relatively easy to check in
practice. The second version, described in
Section~\ref{section:ellkorseltnumbs2}, is bi-directional, but less
practical.  In Section~\ref{section:pq} we discuss elliptic Carmichael
numbers~$pq$ that are the product of exactly two primes. (It is an
easy exercise to show that there are no classical Carmichael numbers of
the form~$pq$.) Finally, we give some numerical examples of elliptic
Carmichael numbers in Section~\ref{section:numericalexamples}.

Without going into details (which are given later), we note that our
construction replaces the quantity~$n-1$ in the classical pseudoprime
definition $b^{n-1}\equiv1\pmod{n}$ with the quantity $n+1-a_n$ in the
case of elliptic curves, where~$a_n$ is the usual coefficient of the
$L$-series of~$E/\QQ$. An integer~$n$ is then an elliptic
pseudoprime for the curve~$E$ and point~$P\in E(\ZZ/n\ZZ)$ if 
$E$ has good reduction at all primes dividing~$n$ and
\begin{equation}
  \label{eqn:n1anPeq0modn}
  (n+1-a_n)P\equiv 0\pmod{n},
\end{equation}
where the congruence~\eqref{eqn:n1anPeq0modn} takes place
in~$E(\ZZ/n\ZZ)$.  Notice that if we take~$n$ to be a prime~$p$,
then~\eqref{eqn:n1anPeq0modn} is automatically true, because
$\#E(\ZZ/p\ZZ)=p+1-a_p$. Thus the analogy between the multiplicative
group and elliptic curves that we are using may be summarized by
noting that
\begin{equation}
  \label{eqn:EMpp1Epp1ap}
  \#\GG_m(\ZZ/p\ZZ)=p-1
  \qquad\text{and}\qquad
  \#E(\ZZ/p\ZZ)=p+1-a_p,
\end{equation}
replacing~$p$ by~$n$ (and removing the equality signs), and asking if
the resulting quantity~$n-1$, respectively~$n+1-a_n$, is still an
annihilator of $\GG_m(\ZZ/n\ZZ)$, respectively~$E(\ZZ/n\ZZ)$.

\begin{remark}
In this paper, when we write~$E(\ZZ/n\ZZ)$, we will always 
assume that~$E$ has good reduction at all primes dividing~$n$. It
follows that a minimal Weierstrass equation for~$E/\QQ$ defines a
group scheme
\[
  E \longrightarrow \Spec(\ZZ/n\ZZ), 
\]
so it makes sense to talk about the group of sections, which is what
we mean by the notation~$E(\ZZ/n\ZZ)$. Further, if~$n$ factors as $n=
p_1^{e_1}\cdots p_t^{e_t} $ with~$p_1,\ldots,p_t$ distinct primes,
then there is a natural isomorphism (essentially the Chinese remainder
theorem) 
\[
  E(\ZZ/n\ZZ) \cong E(\ZZ/p_1^{e_1}\ZZ) \times \cdots \times
    E(\ZZ/p_t^{e_t}\ZZ).
\]
\end{remark}

\section{Elliptic Pseudoprimes}
\label{section:ellpseudoprimes}

In this section we define elliptic pseudoprimes in general and relate
our definition to Gordan's definition of elliptic pseudoprimes on CM
elliptic curves.

\begin{definition}
Let $n\in\ZZ$, let~$E/\QQ$ be an elliptic curve given by a minimal
Weierstrass equation, and let~$P\in E(\ZZ/n\ZZ)$.  Write the
$L$-series of~$E/\QQ$ as $L(E/\QQ,s)=\sum a_n/n^s$.  We say that $n$
is an \emph{elliptic pseudoprime} for~$(E,P)$ if~$n$ has at least two
distinct prime factors and the following two conditions hold:
\begin{align}
\bullet\enspace
    &\text{$E$ has good reduction at every prime~$p$ dividing~$n$.}\notag\\
\bullet\enspace
    &(n+1-a_n)P=0\pmod{n}. \label{eqn:defEPP}
\end{align}
\end{definition}

\begin{remark}
We note that if~$E$ has good reduction at~$p$, then every point
in~$E(\ZZ/p\ZZ)$ is killed by~$p+1-a_p$, since $p+1-a_p=\#E(\ZZ/p\ZZ)$.
\end{remark}

\begin{remark}
\label{remark:gordanellpsdprm}
The first definition of  elliptic pseudoprimes appears to be due to
Gordan~\cite{MR946604}. Gordan's definition, which only applies to
elliptic curves with complex multiplication, is as follows. Let~$E/\QQ$
be an elliptic cruve with complex multiplication by an order in
$\QQ(\sqrt{-D}\,)$, and let $P\in E(\QQ)$ be a non-torsion point.
Then a composite number~$n$ is a Gordan elliptic pseudoprime for
the pair~$(E,P)$ if 
\[
  \LS{-D}{n}=-1 
  \qquad\text{and}\qquad
  (n+1)P\equiv0 \pmod{n}.
\]
Gordan's motivation for this definition was to study elliptic pseudoprimes
as tools for primality and factorization algorithms. Under GRH, he
proves that the set of elliptic pseudoprimes has density~$0$, and
gives an example of a pair~$(E,P)$ having infinitely many elliptic
pseudoprimes.
\par
For simplicity, we consider Gordan's definition for a curve~$E$ that
has CM by the full ring of integers of $\QQ(\sqrt{-D}\,)$. Then for
primes $p\ge5$ of good reduction, we have $a_p(E)=0$ if and only if~$p$
is inert in $\QQ(\sqrt{-D}\,)$, which is equivalent
to~$\tLS{-D}{p}=-1$. Thus the condition~$\tLS{-D}{n}=-1$ implies that at
least one prime~$p$ dividing~$n$ satsifies~$a_p(E)=0$. If we also
assume that~$p^2\nmid n$, then~$a_n=0$, since~$a_n$ is a
multiplicative function.  (More generally, if $a_p=0$, then
$a_{p^{2k+1}}=0$ and $a_{p^{2k}}=(-p)^k$ for all $k\ge0$.)
\par
To recapitulate, we have
\[
  \LS{-D}{n}=-1\quad\text{and}\quad\text{$n$ square-free}
  \quad\Longrightarrow\quad  a_n=0.
\]
Thus for (most) square-free values of~$n$, Gordan's condition
$(n+1)P\equiv0\pmod{n}$ is the same as our condition
$(n+1-a_n)P\equiv0\pmod{n}$, because his Jacobi symbol condition
$\tLS{-D}{n}=-1$ forces $a_n=0$.
\par
For other articles that study Gordan elliptic pseudoprimes and related
quantities, see
\cite{MR1104697,
  MR2496340,
  MR1094951,
  MR1181329,
  MR1464541,
  MR1003565,
  MR1281059,
  MR0970701,
  MR2600561}.
\end{remark}

\section{Elliptic Carmichael Numbers}
\label{section:ellcarmichaelnumbs}

\begin{definition}
Let $n\in\ZZ$ and let~$E/\QQ$ be an elliptic curve. We say
that~$n$ is an \emph{elliptic Carmichael number for~$E$} if~$n$
is an elliptic pseudoprime for~$(E,P)$ for every point~$P\in E(\ZZ/n\ZZ)$.
\end{definition}

Classically, a Carmichael number~$n$ is necessarily odd, since it
satisfies $(-1)^{n-1}\equiv1\pmod{n}$. More intrinsically, this is
true because the multiplicative group~$\GG_m(\QQ)$ has an
element of order~$2$.  The elliptic analog of this fact is the
following elementary proposition.

\begin{proposition}
\label{proposition:effectoftorsion}
Let~$E/\QQ$ be an elliptic curve, and let~$T\in E(\QQ)$ be a torsion 
point of exact order~$m$. If~$n$ is a Carmichael number for~$E$, then
\[
  n \equiv a_n-1 \pmod{m}.
\]
\end{proposition}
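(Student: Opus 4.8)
The plan is to transport the rational torsion point $T$ into $E(\ZZ/n\ZZ)$ and then apply the defining property of a Carmichael number to its image. Since $n$ is a Carmichael number it is in particular an elliptic pseudoprime, so $E$ has good reduction at every prime dividing $n$ and $n$ has at least two distinct prime factors. Good reduction makes the minimal Weierstrass equation a group scheme over $\Spec(\ZZ/n\ZZ)$ (as in the Remark), so reduction gives a homomorphism $E(\QQ)\to E(\ZZ/n\ZZ)$; write $\Tbar$ for the image of $T$. Because $mT=0$ in $E(\QQ)$ we immediately get $m\Tbar=0$, so $\ord(\Tbar)\mid m$. The heart of the argument is to upgrade this to $\ord(\Tbar)=m$; granting that, the Carmichael hypothesis applied to the particular point $P=\Tbar$ yields $(n+1-a_n)\Tbar=0$ in $E(\ZZ/n\ZZ)$, whence $m\mid(n+1-a_n)$, which is exactly $n\equiv a_n-1\pmod{m}$.

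So the crux is to show that reduction does not decrease the order of $T$. Here I would exploit the fact that $n$ has at least two distinct prime factors, hence at least one \emph{odd} prime $p\mid n$. I would factor the reduction $E(\QQ)\to E(\ZZ/n\ZZ)$ further through $E(\FF_p)$ and argue that reduction modulo an odd prime of good reduction is injective on torsion. Concretely, the kernel of $E(\QQ_p)\to E(\FF_p)$ is the formal group $\hat E(p\ZZ_p)$, and for $p\ge3$ the formal logarithm identifies $\hat E(p\ZZ_p)$ with $(\ZZ_p,+)$, which is torsion-free; hence no nonzero torsion point of $E(\QQ)$ can reduce to $0$ modulo $p$. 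Therefore the image of $T$ in $E(\FF_p)$ still has order $m$, which forces $m$ to divide $\ord(\Tbar)$. Combined with $\ord(\Tbar)\mid m$ from the first paragraph, this gives $\ord(\Tbar)=m$.

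I expect the order-preservation step to be the only genuine obstacle, and the point requiring care is that one must not reduce modulo a prime dividing both $m$ and $n$ without justification: at $p=2$, or using only the classical prime-to-$p$ injectivity statement, a torsion point whose order is divisible by the residue characteristic can collapse to $0$. The clean way around this is the observation above that at an \emph{odd} prime of good reduction the entire torsion subgroup injects, together with the guarantee—built into the definition of pseudoprime—that such an odd $p\mid n$ exists. (Alternatively, one can treat one prime power $\ell^{k}\,\|\,m$ at a time, each time choosing a prime $q\mid n$ with $q\neq\ell$—which exists because $n$ has at least two prime factors—and invoking injectivity of reduction on the prime-to-$q$ torsion; this uses only the standard prime-to-$p$ statement.) Once $\ord(\Tbar)=m$ is in hand, the conclusion follows at once from the Carmichael property as indicated above.
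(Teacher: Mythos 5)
Your proof is correct, but it takes a genuinely different route from the paper's. The paper never establishes that the reduction $\Tbar$ has exact order $m$; instead, setting $N=n+1-a_n$, it picks two distinct primes $p,q\mid n$, writes $m=p^im'=q^jm''$ with $p\nmid m'$ and $q\nmid m''$, and applies only the standard injectivity of \emph{prime-to-$p$} torsion under reduction \cite[VII.3.1]{MR2514094} at each of the two primes to get $p^iNT=0$ and $q^jNT=0$ in $E(\QQ)$, whence $NT=0$ since $\gcd(p^i,q^j)=1$ and then $m\mid N$. That is essentially your parenthetical ``one prime power $\ell^k$ at a time'' variant, packaged slightly differently. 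Your main argument instead shows $\ord(\Tbar)=m$ by using injectivity of reduction on \emph{all} of $E(\QQ_p)_{\tors}$ at a single odd prime $p\mid n$, via torsion-freeness of the formal group $\hat{E}(p\ZZ_p)\cong p\ZZ_p^+$ for $p\ge 3$ (a fact the paper itself quotes in Remark~\ref{remark:exactseq}); and your observation that an odd prime divisor of $n$ must exist, because $n$ has two distinct prime factors, correctly plugs the only hole in that strategy. What each approach buys: yours is more direct, needs only one prime, and yields the slightly stronger conclusion that reduction preserves the exact order of $T$, at the cost of invoking the stronger formal-group fact and being unusable at $p=2$; the paper's argument uses only the weaker prime-to-$p$ injectivity statement, is uniform in $p$ (indifferent to $p=2$), but genuinely requires both prime factors of $n$. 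Your explicit caution about not reducing at a prime dividing both $m$ and $n$ without justification is exactly the right point of care, and both routes handle it soundly.
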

\begin{proof}
Suppose that~$n$ is a Carmichael number for~$E$.  
To ease notation, let $N=n+1-a_n$.
By definition,~$n$ has at least two distinct prime factors, say~$p$
and~$q$.  Further, we know that $NT\equiv0\pmod{n}$, and hence
\[
  NT\equiv 0 \pmod{p}
  \quad\text{and}\quad
  NT\equiv 0 \pmod{q}.
\]
\par 
Write $m=p^im'$ with $p\notdivide m'$. Then
$p^iNT\equiv0\pmod{p}$, and also~$p^iNT$ is killed by~$m'$. The
injectivity of prime-to-$p$ torsion under reduction
modulo~$p$~\cite[VII.3.1]{MR2514094} allows us to conclude
that~$p^iNT=0$.
\par
Similarly, writing $m=q^jm''$ with~$q\notdivide m''$, we find
that~$q^jNT=0$. Since~$p$ and~$q$ are distinct, it follows that~$NT=0$.
But by assumption,~$T$ has exact order~$m$,  hence~$m|N$.
\end{proof}

\begin{remark}
An appropriate formulation of
Proposition~\ref{proposition:effectoftorsion} is true more generally
for abelian varieties. Thus let~$A/\QQ$ be an abelian variety, let~$n$
be an integer with at least two distinct prime factors~$p$ and~$q$
such that~$A$ has good reduction at~$p$ and~$q$, and let~$N$ be an
integer that annihilates~$A(\ZZ/n\ZZ)$.  (Here we can take~$A$ to be
the N\'eron model over~$\ZZ$, so~$A$ is a group scheme over~$\Spec\ZZ$
and it makes sense to talk about the group of sections~$A(\ZZ/n\ZZ)$.)
Suppose further that~$A(\QQ)$ has a point of exact order~$m$. Then
$m\mid N$.
\end{remark}

\begin{definition}
Let~$n\in\ZZ$. We will say that~$n$ is a \emph{universal elliptic
  Carmichael number} if~$n$ is an elliptic Carmichael number for every
elliptic curve (elliptic scheme) over~$\ZZ/n\ZZ$.
\end{definition}

\begin{remark}
A natural question is whether there are \emph{any} universal elliptic
Carmichael numbers.  Our guess is that probably none exist, or in any
case, that there are at most finitely many.  This raises the
interesting question of finding nontrivial upper and lower bounds, in
terms of~$n$, for the size of the set
\begin{equation}
  \label{eqn:EnCarm}
  \{ E\bmod n : \text{$n$ is a Carmichael number for $E$} \}.
\end{equation}
For example, suppose that~$n=pq$ is a product of distinct primes.  A
very rough heuristic estimate suggests that the probability that a
given~$E\bmod pq$ has~$pq$ as a Carmichael number is~$O((pq)^{-1})$,
so at least for such~$n$ one might conjecture that the size of the
set~\eqref{eqn:EnCarm} is bounded independently of~$pq$.
\end{remark}

\section{Elliptic Korselt Numbers of Type I}
\label{section:ellkorseltnumbs}

The classical Korselt criterion
(Proposition~\ref{proposition:korselt}) gives an efficient method for
determining if a given integer~$n$ is a Carmichael number, assuming of
course that one is able to factor~$n$ into a product primes. In this
section we give a practical one-way Korselt criterion for elliptic
Carmichael numbers. Any number satisfying this elliptic Korselt
criterion is an elliptic Carmichael number, but the converse need not
be true.

\begin{definition}
Let $n\in\ZZ$, and let~$E/\QQ$ be an elliptic curve. We say that~$n$ is
an \emph{elliptic Korselt number for~$E$ of Type I} if $n$ has at
least two distinct prime factors, and if for every prime~$p$
dividing~$n$, the following conditions hold:
\begin{align}
\bullet\enspace
    &\text{$E$ has good reduction at~$p$.}\notag\\
\bullet\enspace
    &\text{$p+1-a_p$ divides $n+1-a_n$.}
         \label{eqn:defKorselt1} \\
\bullet\enspace
    & \ord_p(a_n-1)\ge\ord_p(n)
       -\begin{cases}
          1&\text{if $a_p\not\equiv1\pmod{p}$,}\\
          0&\text{if $a_p\equiv1\pmod{p}$.}\\
       \end{cases}
      \label{eqn:defKorselt2}
\end{align}
\end{definition}

\begin{remark}
If~$n$ is square-free and~$a_p\not\equiv1\pmod{p}$ for all~$p\mid n$,
then the condition~\eqref{eqn:defKorselt2} is vacuous, since it
reduces to the statement that $\ord_p(a_n-1)\ge0$.
\end{remark}

\begin{remark}
\label{remark:ellcarmnotsqfr}
Classical Carmichael numbers are automatically square-free. The
elliptic analog of this fact is our Korselt
condition~\eqref{eqn:defKorselt2}.  To see the relationship, we extend
the analogy used by Gordan to consider values of~$n$ such that~$E$ is
supersingular at all primes~$p\mid n$. For ease of exposition, we'll
make the slightly stronger assumption that $a_p=0$ for all~$p\mid
n$. (This is only stronger for $p=2$ and $p=3$.)  Then $p\mid a_n$,
since as noted earlier, $a_n$ is a multiplicative function,
and~$a_p=0$ implies that $a_{p^{2k+1}}=0$ and $a_{p^{2k}}=(-p)^k$ for
all $k\ge0$. Hence in this situation we have
\[
  \ord_p(a_n-1)=0\quad\text{and}\quad a_p=0\not\equiv1\pmod{p},
\]
so~\eqref{eqn:defKorselt2} reduces to the statement
that~$\ord_p(n)\le1$. This is true for all $p\mid n$, so~$n$ is
square-free. Of course, this is under the assumption that~$a_p=0$ for
all~$p\mid n$. As we will see later in Example~\ref{example:y2x3x3},
elliptic Carmichael numbers need not in general be square-free.
\end{remark}

\begin{remark}
\label{remark:apeq1modp}
If $p\ge7$, then 
\[
  a_p\equiv1\pmod{p}\quad\Longleftrightarrow\quad
  \text{$E$ is anomalous at~$p$,}
\]
where we recall that~$E$ is anomalous if~$a_p=1$, or equivalently, if
we have $\#E(\ZZ/p\ZZ)=p$.  In particular,
condition~\eqref{eqn:defKorselt2} in the definition of Type~I Korselt
numbers is vacuous if the following three conditions are true for all
prime divisors~$p$ of~$n$:
\[
  \text{$p\ge7$,\quad $E$ is not anomalous at~$p$, \quad$p^2\nmid n$.}
\]
We also observe that the Hasse--Weil estimate $|a_p|\le2\sqrt{p}$
implies that
\[
  \ord_p(p+1-a_p)\le1\quad\text{unless $p=2$ and $a_p=-1$.}
\]
The exceptional case, namely $\ord_2(3-a_2)=2$ when $a_2=-1$, is the
reason that the next proposition deals only with odd values of~$n$.
\end{remark}

\begin{proposition}[Elliptic Korselt Criterion~I]
\label{proposition:korseltimpliescarm}
Let $n\in\ZZ$ be an odd integer, and let~$E/\QQ$ be an elliptic
curve. If~$n$ is an elliptic Korselt number for~$E$ of Type~I,
then~$n$ is an elliptic Carmichael number for~$E$.
\end{proposition}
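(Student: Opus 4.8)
The plan is to reduce to a single prime power via the Chinese Remainder Theorem and then analyze the reduction-mod-$p$ exact sequence prime by prime. Write $N=n+1-a_n$ and factor $n=p_1^{e_1}\cdots p_t^{e_t}$. Using the isomorphism $E(\ZZ/n\ZZ)\cong\prod_i E(\ZZ/p_i^{e_i}\ZZ)$ recalled in the introduction, it suffices to prove that $N$ annihilates $E(\ZZ/p^{e}\ZZ)$ for each prime power $p^{e}\,\|\,n$, so that $e=\ord_p(n)$. Fix such a $p$; since $n$ is odd, $p\ge3$. I would work with the filtration coming from reduction modulo $p$, namely the exact sequence
\[
 0\longrightarrow E_1(\ZZ/p^{e}\ZZ)\longrightarrow E(\ZZ/p^{e}\ZZ)\longrightarrow E(\ZZ/p\ZZ)\longrightarrow 0,
\]
where $E_1$ is the kernel of reduction. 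Since $E(\ZZ/p\ZZ)$ has order $p+1-a_p$, and for $p\ge3$ the formal logarithm identifies $E_1(\ZZ/p^{e}\ZZ)$ with $p\ZZ/p^{e}\ZZ\cong\ZZ/p^{e-1}\ZZ$ \cite[Ch.~IV, VII]{MR2514094}, the group $E(\ZZ/p^{e}\ZZ)$ has order $(p+1-a_p)p^{e-1}$, with a cyclic $p$-group as kernel of reduction.

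To show that $N$ kills this group I would argue primary component by primary component. For a prime $\ell\neq p$, the kernel $E_1(\ZZ/p^{e}\ZZ)$ is a $p$-group, so it meets the $\ell$-primary part of $E(\ZZ/p^{e}\ZZ)$ trivially; hence that $\ell$-primary part injects into $E(\ZZ/p\ZZ)$, exactly as in the injectivity of prime-to-$p$ torsion under reduction \cite[VII.3.1]{MR2514094}. Consequently it is annihilated by $p+1-a_p$, and the Korselt condition~\eqref{eqn:defKorselt1} that $p+1-a_p\mid N$ shows that it is annihilated by $N$.

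The remaining, and only delicate, point is the $p$-primary part, where I would combine the formal-group structure with condition~\eqref{eqn:defKorselt2}. By Hasse--Weil and $p\ge3$ one has $0<p+1-a_p<p^{2}$, so $\ord_p(p+1-a_p)\le1$, the value $1$ occurring exactly in the anomalous case $a_p\equiv1\pmod p$. Thus the $p$-primary part of $E(\ZZ/p^{e}\ZZ)$ has order $p^{e-1}$ when $a_p\not\equiv1\pmod p$ and order $p^{e}$ when $a_p\equiv1\pmod p$; in either case its exponent divides $p^{e-1}$, respectively $p^{e}$. It then remains to bound $\ord_p(N)$ below by the matching exponent. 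Writing $N=n-(a_n-1)$, using $\ord_p(n)=e$ and the ultrametric inequality gives $\ord_p(N)\ge\min\bigl(e,\ord_p(a_n-1)\bigr)$. When $a_p\not\equiv1\pmod p$, condition~\eqref{eqn:defKorselt2} reads $\ord_p(a_n-1)\ge e-1$, whence $\ord_p(N)\ge e-1$; when $a_p\equiv1\pmod p$, it reads $\ord_p(a_n-1)\ge e$, whence $\ord_p(N)\ge e$. In both cases $N$ annihilates the $p$-primary part, completing the prime-power step and hence the proof.

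The main obstacle is precisely this last bookkeeping: the Korselt hypothesis is phrased in terms of $\ord_p(a_n-1)$, whereas annihilation is governed by $\ord_p(N)=\ord_p(n+1-a_n)$, and one must see that the large $p$-adic size of $n$ forces the two to agree in the relevant range. The split into the cases $a_p\equiv1$ and $a_p\not\equiv1\pmod p$ is exactly the source of the case distinction in~\eqref{eqn:defKorselt2}, while the restriction to odd $n$ is what guarantees both that $E_1(\ZZ/p^{e}\ZZ)$ is the cyclic group $\ZZ/p^{e-1}\ZZ$ and that $\ord_p(p+1-a_p)\le1$, the excluded case being $p=2$, $a_2=-1$, as recorded in Remark~\ref{remark:apeq1modp}. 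Everything else is routine.
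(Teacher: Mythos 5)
Your proof is correct and follows essentially the same route as the paper: the reduction-mod-$p$ exact sequence for the formal group, condition~\eqref{eqn:defKorselt1} to kill the prime-to-$p$ part, the ultrametric computation $\ord_p(n+1-a_n)\ge\min\bigl(\ord_p(n),\ord_p(a_n-1)\bigr)$ combined with condition~\eqref{eqn:defKorselt2} and the Hasse--Weil bound $\ord_p(p+1-a_p)\le1$ (where oddness of $n$ is used) to kill the $p$-part, and the Chinese remainder theorem to glue. The only difference is cosmetic: you annihilate each primary component of $E(\ZZ/p^e\ZZ)$ separately, whereas the paper packages the same estimates into the single divisibility $p^{\ord_p(n)-1}(p+1-a_p)\mid n+1-a_n$ and invokes the filtration statement~\eqref{eqn:pi1p1apPeq0} directly.
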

\begin{proof}
Let~$p$ be a prime of good reduction for~$E$.  Then the group
$E(\ZZ/p\ZZ)$ has order $p+1-a_p$, so the standard filtration on the
formal group of~$E(\QQ_p)$ (see~\cite{MR2514094}) implies that
\begin{equation}
  \label{eqn:pi1p1apPeq0}
  p^{i-1}(p+1-a_p)P\equiv0\pmod{p^i}
  \qquad\text{for all $i\ge1$ and all $P\in E(\QQ_p)$.}
\end{equation}
\par
Now let $P\in E(\ZZ/n\ZZ)$, and write $n=p^in'$ with $i\ge1$ and
$p\notdivide n'$.  Suppose first that $a_p\not\equiv1\pmod{p}$.
Then~$p+1-a_p$ is relatively prime to~$p$, so~\eqref{eqn:defKorselt1}
and~\eqref{eqn:defKorselt2} together imply that
\begin{equation}
  \label{eqn:pi1p1apdivn1an}
  p^{i-1}(p+1-a_p)\quad\text{divides}\quad n+1-a_n.
\end{equation}
\par
Next suppose that $a_p\equiv1\pmod{p}$. As noted earlier, the
Hasse--Weil estimate $|a_p|\le2\sqrt{p}$ then implies that
\begin{equation}
  \label{eqn:ordpp1ap1}
  \ord_p(p+1-a_p)=1.
\end{equation}
(This is where we use the assumption that~$n$ is odd, so~$p\ne2$.)
We compute
\begin{align}
  \label{eqn:ordpn1angepi1}
  \ord_p(n&+1-a_n) \notag\\
  &= \ord_p(p^in'+1-a_n) 
    &&\text{since $n=p^in'$,}\notag\\
  &\ge \min\bigl\{i,\ord_p(a_n-1)\bigr\} 
    &&\text{triangle inequality,} \notag\\
  &\ge \min\bigl\{i,\ord_p(n)\bigr\} 
    &&\text{from Korselt condition \eqref{eqn:defKorselt2},} \notag\\
  &=i
    &&\text{since $n=p^in'$,}\notag\\
  &=\ord_p\bigl(p^{i-1}(p+1-a_p)\bigr)
    &&\text{from \eqref{eqn:ordpp1ap1}.}
\end{align}
\par
Combining~\eqref{eqn:pi1p1apdivn1an} and~\eqref{eqn:ordpn1angepi1},
we have proven that
\[
  p^{i-1}(p+1-a_p) \mid n+1-a_n
  \qquad\text{for all primes $p\mid n$.}
\]
It follows from~\eqref{eqn:pi1p1apPeq0} that
\[
  (n+1-a_n)P\equiv 0 \pmod{p^{\ord_p(n)}}
  \qquad\text{for all primes $p\mid n$.}
\]
Using the Chinese remainder theorem, we conclude
\[
  (n+1-a_n)P\equiv 0 \pmod{n}.
\]
Finally, since~$P\in E(\ZZ/n\ZZ)$ was arbitrary, this completes the proof
that~$n$ is an elliptic Carmichael number for~$E$.
\end{proof}

\section{Elliptic Korselt Numbers of Type II}
\label{section:ellkorseltnumbs2}

The classical Korselt criterion gives both a necessary and sufficient
condition for a number~$n$ to be a Carmichael number. Our
Proposition~\ref{proposition:korseltimpliescarm} gives one
implication, namely Type~I~Korselt implies Carmichael.  The reason
we do not get the converse implication
is because condition~\eqref{eqn:defKorselt1} in the definition of
Type~I Korselt numbers is not, in fact, the exact analog of the
classical condition.  Condition~\eqref{eqn:defKorselt1} comes from the
analogy, already noted in the introduction~\eqref{eqn:EMpp1Epp1ap},
that
\[
  \#\GG_m(\ZZ/p\ZZ)=p-1
  \qquad\text{and}\qquad
  \#E(\ZZ/p\ZZ)=p+1-a_p.
\]
However, the real reason that~$p-1$ appears in the classical Korselt
criterion is because~$p-1$ is the exponent of the
group~$(\ZZ/p\ZZ)^*$, i.e.,~$p-1$ is the smallest positive integer
that annihilates every element of~$(\ZZ/p\ZZ)^*$. This follows, of course,
from the fact that~$(\ZZ/p\ZZ)^*$ is cyclic.
\par
Elliptic curve groups~$E(\ZZ/p\ZZ)$, by way of contrast, need not be
cyclic, although it is true that they are always a product of at most
two cyclic groups. So a more precise elliptic analog of the classical
Korselt criterion is obtained by using the exponent of the
group~$E(\ZZ/p\ZZ)$, rather than its order. This leads to the
following definition and criterion, which while more satisfying in
that it is both necessary and sufficient, is much less practical than
Proposition~\ref{proposition:korseltimpliescarm}.

\begin{definition}
For a group~$G$, we write~$\e(G)$ for the \emph{exponent of~$G$},
i.e.,  the least common multiple of the orders of the elements
of~$G$. Equivalently,~$\e(G)$ is the smallest postive integer such
that~$g^{\e(G)}=1$ for all~$g\in G$.  For an elliptic curve~$E/\QQ$,
integer~$n$, and prime~$p$, to ease notation we will write
\[
  \e_{n,p}(E) = \e\left(E\left(\frac{\ZZ}{p^{\ord_p(n)}\ZZ}\right)\right),  
\]
\end{definition}

\begin{definition}
Let $n\in\ZZ$, and let~$E/\QQ$ be an elliptic curve. We say that~$n$ is
an \emph{elliptic Korselt number for~$E$ of Type II} if $n$ has at
least two distinct prime factors, and if for every prime~$p$
dividing~$n$, the following conditions hold:
\begin{align}
\bullet\enspace
    &\text{$E$ has good reduction at~$p$.}\notag\\
\bullet\enspace
    &\text{$\e_{n,p}(E)$ divides $n+1-a_n$.}
         \label{eqn:defKorselt3} 
\end{align}
\end{definition}

\begin{proposition}[Elliptic Korselt Criterion~II]
\label{proposition:korselt2iffcarm}
Let $n>2$ be an odd integer, and let~$E/\QQ$ be an elliptic
curve. Then~$n$ is an elliptic Carmichael number for~$E$ if and only
if~$n$ is an elliptic Korselt number for~$E$ of Type~II.
\end{proposition}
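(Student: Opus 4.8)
The plan is to recognize that ``$n$ is an elliptic Carmichael number for~$E$'' is nothing but the assertion that $N:=n+1-a_n$ annihilates the entire group $E(\ZZ/n\ZZ)$, and then to convert this single global annihilation statement into the collection of local divisibilities~\eqref{eqn:defKorselt3} by means of the Chinese remainder decomposition together with the behavior of the group exponent under direct products. Both definitions share the two preliminary requirements that~$n$ have at least two distinct prime factors and that~$E$ have good reduction at every prime dividing~$n$, so these match automatically, and I would assume them throughout. The entire content then lies in the equivalence of ``$(n+1-a_n)P=0$ in $E(\ZZ/n\ZZ)$ for every~$P$'' with ``$\e_{n,p}(E)\mid n+1-a_n$ for every prime $p\mid n$.''

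First I would restate the Carmichael condition purely group-theoretically: it says exactly that~$N$ kills every element of the finite abelian group $E(\ZZ/n\ZZ)$, which by the very definition of the exponent is equivalent to $\e\bigl(E(\ZZ/n\ZZ)\bigr)\mid N$. Next I would invoke the Chinese remainder isomorphism $E(\ZZ/n\ZZ)\cong\prod_{p\mid n}E(\ZZ/p^{\ord_p(n)}\ZZ)$ recorded in Section~\ref{section:introduction}, combined with the elementary fact that the exponent of a finite direct product of abelian groups is the least common multiple of the exponents of the factors, to obtain
\[
  \e\bigl(E(\ZZ/n\ZZ)\bigr)=\LCM_{p\mid n}\e_{n,p}(E).
\]
Thus the Carmichael condition is equivalent to $\LCM_{p\mid n}\e_{n,p}(E)\mid N$. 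Finally, since a least common multiple divides~$N$ if and only if each of its factors does, this is in turn equivalent to $\e_{n,p}(E)\mid N$ for all $p\mid n$, which is precisely the Type~II Korselt condition~\eqref{eqn:defKorselt3}. Reading the chain of equivalences in both directions yields the proposition.

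I do not expect any genuine obstacle: every step is a standard property of finite abelian groups, and the only substantive ingredient is the marriage of the Chinese remainder decomposition with the product formula for exponents. I would add one remark: unlike the Type~I criterion (Proposition~\ref{proposition:korseltimpliescarm}), where oddness was forced by the exceptional case $\ord_2(3-a_2)=2$ arising from using the \emph{order} $p+1-a_p$ in place of the exponent, the present argument works verbatim for every~$n$, because it operates directly with the exponent $\e_{n,p}(E)$ of each local factor. The hypothesis that~$n$ be odd is therefore retained only for parallelism with the Type~I statement and plays no role in the proof.
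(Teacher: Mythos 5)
Your proof is correct and follows essentially the same route as the paper's: both reduce the Carmichael condition to the statement that $n+1-a_n$ annihilates $E(\ZZ/n\ZZ)$, then pass through the Chinese remainder decomposition and the identity $\e\bigl(E(\ZZ/n\ZZ)\bigr)=\LCM_{p\mid n}\e_{n,p}(E)$, the only cosmetic difference being that you run a single chain of equivalences where the paper argues the two implications separately. Your closing remark is also accurate: the paper's proof likewise never invokes the oddness of $n$, which appears in the statement only in parallel with the Type~I criterion.
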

\begin{proof}
The definitions of both elliptic Carmichael and elliptic Korselt
numbers include the requirement that~$E$ have good reduction at every
prime dividing~$n$, so we assume that this is true without further
comment.
\par
Suppose first that~$n$ is an elliptic Carmichael number.  By
definition, this means that
\begin{equation}
  \label{eqn:n1anP0nforallP}
  (n+1-a_n)P=0\pmod{n}
  \quad\text{for all $P\in E(\ZZ/n\ZZ)$.}
\end{equation}
In other words, the quantity~$n+1-a_n$ annihilates the group
$E(\ZZ/n\ZZ)$. Hence for any prime power~$p^i$ dividing~$n$, the
quantity~$n+1-a_n$ will also annihilate the group $E(\ZZ/p^i\ZZ)$. It
follows that~$n+1-a_n$ is divisible by~$\e_{p,n}(E)$, which is the
exponent of the group~$E(\ZZ/p^i\ZZ)$.  This is true for every prime
dividing~$n$, and hence~$n$ is a Type~II Korselt number for~$E$.
\par
Conversely, suppose that~$n$ is Type~II Korselt. Factoring~$n$
as $n=p_1^{e_1}\cdots p_t^{e_t}$, we have from the Chinese remainder theorem
\[
  E(\ZZ/n\ZZ) = E(\ZZ/p_1^{e_1}\ZZ)\times\cdots\times E(\ZZ/p_t^{e_t}\ZZ),
\]
from which we see that
\begin{equation}
  \label{eqn:eEZnlcm}
  \e\bigl(E(\ZZ/n\ZZ)\bigr) = 
  \LCM\bigl[\e_{n,p_1}(E),\ldots,\e_{n,p_t}(E)\bigr].
\end{equation}
Property~\eqref{eqn:defKorselt3} of Type~II Korselt numbers says
that 
\begin{equation}
  \label{eqn:enpforlcm}
  \e_{n,p}(E)\mid n+1-a_n
  \quad\text{for all $p\mid n$,}
\end{equation}
and combining~\eqref{eqn:eEZnlcm} and~\eqref{eqn:enpforlcm} yields
\[
  \e\bigl(E(\ZZ/n\ZZ)\bigr) \mathbin{\bigm|} n+1-a_n.
\]
It follows that $n+1-a_n$ annihilates $E(\ZZ/n\ZZ)$, which means that~$n$
is an elliptic Carmichael number. 
\end{proof}

\begin{corollary}
If~$n$ is an odd
elliptic Korselt number for~$E/\QQ$ of Type~I, then it is also an
elliptic Korselt number for~$E/\QQ$ of Type~II. 
\end{corollary}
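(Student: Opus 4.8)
The plan is to obtain this as an immediate consequence of the two Korselt propositions already proved, using the elliptic Carmichael property as the intermediary. Schematically, I want the chain
\[
  \text{Type~I Korselt} \;\Longrightarrow\; \text{Carmichael} \;\Longrightarrow\; \text{Type~II Korselt},
\]
where the first implication is exactly Proposition~\ref{proposition:korseltimpliescarm} and the second is the necessity half (the ``only if'' direction) of Proposition~\ref{proposition:korselt2iffcarm}.

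The one thing that requires care is that the hypotheses of the two propositions line up. By assumption $n$ is odd, which is what Proposition~\ref{proposition:korseltimpliescarm} demands. Proposition~\ref{proposition:korselt2iffcarm}, however, also requires $n>2$, so first I would note that the definition of a Type~I Korselt number forces $n$ to have at least two distinct prime factors; since $n$ is odd, these factors are odd primes, the smallest two being $3$ and $5$, so $n\ge15>2$. Hence both $n$ odd and $n>2$ hold, and both propositions are applicable.

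With the bookkeeping settled, Proposition~\ref{proposition:korseltimpliescarm} shows that the odd Type~I Korselt number $n$ is an elliptic Carmichael number for $E$, and then the forward direction of Proposition~\ref{proposition:korselt2iffcarm} shows that this elliptic Carmichael number is a Type~II Korselt number for $E$, which is the assertion. Because there is essentially no computation, I do not anticipate a genuine obstacle; the only subtlety is the $n>2$ check above, which is exactly what licenses the use of Proposition~\ref{proposition:korselt2iffcarm}.

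As an alternative that bypasses the Carmichael detour, one could argue directly, prime by prime. For each $p\mid n$, the exponent $\e_{n,p}(E)$ divides the order of the group $E(\ZZ/p^{\ord_p(n)}\ZZ)$, which equals $p^{\ord_p(n)-1}(p+1-a_p)$ by the formal group filtration, while the computation carried out inside the proof of Proposition~\ref{proposition:korseltimpliescarm} establishes that $p^{\ord_p(n)-1}(p+1-a_p)$ divides $n+1-a_n$. Transitivity of divisibility then yields $\e_{n,p}(E)\mid n+1-a_n$ for every $p\mid n$, which is condition~\eqref{eqn:defKorselt3}. I would present the chained argument as the primary proof, since it reuses the two propositions verbatim, and mention this direct route only as a remark.
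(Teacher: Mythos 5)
Your proposal is correct and is essentially the paper's own proof: the paper proves the corollary precisely via the chain Type~I Korselt $\Rightarrow$ Carmichael (Proposition~\ref{proposition:korseltimpliescarm}) $\Rightarrow$ Type~II Korselt (the ``only if'' direction of Proposition~\ref{proposition:korselt2iffcarm}). Your verification that $n\ge15>2$ is a small bit of diligence the paper omits, and your alternative prime-by-prime argument via $\e_{n,p}(E)\mid p^{\ord_p(n)-1}(p+1-a_p)\mid n+1-a_n$ is also sound, but the primary route you chose matches the paper exactly.
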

\begin{proof}
Propositions~\ref{proposition:korseltimpliescarm}
and~\ref{proposition:korselt2iffcarm} give the implications
\[\begin{CD}
  \text{Korselt Type I}
  @>\text{Prop.\ \ref{proposition:korseltimpliescarm}}>>
  \text{Carmichael}
  @>\text{Prop.\ \ref{proposition:korselt2iffcarm}}>>
  \text{Korselt Type II}.
  \end{CD}
\]
\end{proof}

In order to understand the definition of elliptic Korselt numbers of
Type~II, we gather some information about the exponents~$\e_{n,p}(E)$.
We begin with a slightly technical definition.

\begin{definition}
Let $p\ge3$ be a prime, and let~$E/\QQ$ be an elliptic curve with
good anomalous reduction at~$p$, i.e., $a_p(E)\equiv1\pmod{p}$. 
(If $p\ge7$, this is equivalent to $a_p(E)=1$.) For each power~$p^i$
with~$i\ge2$, we say that $E$ is \emph{$p^i$-canonical} if
\[
  E(\ZZ/p^i\ZZ)[p] \cong \ZZ/p\ZZ \times \ZZ/p\ZZ,
\]
and  $E$ is \emph{$p^i$-noncanonical} if
\[
  E(\ZZ/p^i\ZZ)[p] \cong \ZZ/p\ZZ.
\]
\end{definition}

\begin{remark}
\label{remark:exactseq}
For primes~$p\ge3$, the formal group of~$E/\QQ_p$ satisfies
$\hat{E}(p\ZZ_p)\cong p\ZZ_p^+$ (see
\cite[Theorem~IV.6.4]{MR2514094}), so there is an exact sequence
\[
  0 \longrightarrow p\ZZ_p^+
    \longrightarrow E(\ZZ_p) \longrightarrow E(\ZZ/p\ZZ)
    \longrightarrow 0.
\]
Reducing modulo~$p^i$ gives
\begin{equation}
  \label{eqn:lesforE1}
  0 \longrightarrow {p\ZZ}/{p^i\ZZ}
    \longrightarrow E(\ZZ/p^i\ZZ) \longrightarrow E(\ZZ/p\ZZ)
    \longrightarrow 0.
\end{equation}
Assume now that~$i\ge2$ and $a_p\equiv1\pmod{p}$,
so in particular 
\[
  \#E(\ZZ/p\ZZ)=p+1-a_p\equiv0\pmod{p}.
\]
Note that the Hasse--Weil estimate says that $p^2\nmid\#E(\ZZ/p\ZZ)$,
so taking the $p$-torsion of~\eqref{eqn:lesforE1} gives
\begin{equation}
  \label{eqn:lesforE2}
  0 \longrightarrow \ZZ/p\ZZ \longrightarrow E(\ZZ/p^i\ZZ)[p]
    \longrightarrow \ZZ/p\ZZ  \longrightarrow 0.
\end{equation}
This shows that $E(\ZZ/p^i\ZZ)[p]\cong(\ZZ/p\ZZ)^k$ with $k=1$ or~$2$,
and hence that~$E$ is either $p^i$-canonical or $p^i$-noncanonical,
i.e., there is no third option. 
\end{remark}

\begin{remark}
\label{remark:canlift} 
For an ordinary elliptic curve~$\tilde C/\FF_p$, the \emph{canonical
  lift}, also sometimes called the \emph{Deuring lift}, is an elliptic
curve~$C/\QQ_p$ whose reduction is~$\tilde C$ and having the property
that $\End(C)\cong\End(\tilde C)$. Equivalently, the Frobenius map
on~$\tilde C$ lifts to an endomorphism of~$C$. Necessarily, the
curve~$C$ has~CM. We denote the canonical lift by $\Lift(\tilde C/\FF_p)$.
Now let~$E/\QQ$ be an elliptic curve. 
A result of Gross~\cite[page~514]{MR1074305} 
implies that the sequence~\eqref{eqn:lesforE2} splits if and only
\[
  j(E) \equiv j\bigl(\Lift(\tilde E/\FF_p)\bigr) \pmod{p^2},
\]
i.e., if and only if~$E\bmod{p^2}$ is isomorphic, modulo~$p^2$, to the
canonical lift of~$E\bmod{p}$.  Thus at least for~$i=2$, the curve~$E$
is $p^2$-canonical according to our definition if $E\bmod{p^2}$ is a
canonical lift in the usual sense.  For further information about
canonical lifts, see for example~\cite{MR1074305,MR1801221}. 
\end{remark}

\begin{lemma}
\label{lemma:enpE}
Let~$p\ge3$ be a prime, and factor
\[
  \e_{n,p}(E) = p^fA\quad\text{with $\gcd(A,p)=1$.}
\]
\vspace{-10pt}
\begin{parts}
\Part{(a)}
If $a_p\not\equiv1\pmod{p}$, then
\[
  A \mid p+1-a_p \quad\text{and}\quad f = \ord_p(n)-1.
\]
\Part{(b)}
If $a_p\equiv1\pmod{p}$, then $A=1$ or~$2$, and
\[
  f = \begin{cases}
    \ord_p(n)-1&\text{if $E$ is $p^{\ord_p(n)}$-canonical,}\\
    \ord_p(n)&\text{if $E$ is $p^{\ord_p(n)}$-noncanonical.}\\
  \end{cases}
\]
\end{parts}
\end{lemma}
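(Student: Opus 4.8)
The plan is to analyze the group $E(\ZZ/p^i\ZZ)$, where I abbreviate $i=\ord_p(n)$, by splitting it into its $p$-primary and prime-to-$p$ parts and reading off the two factors $p^f$ and $A$ of the exponent separately. The basic tool is the exact sequence from Remark~\ref{remark:exactseq},
\[
  0 \longrightarrow p\ZZ/p^i\ZZ \longrightarrow E(\ZZ/p^i\ZZ) \longrightarrow E(\ZZ/p\ZZ) \longrightarrow 0,
\]
whose kernel is cyclic of order $p^{i-1}$, generated by the class of $p$. Writing $E(\ZZ/p^i\ZZ)=S\times T$ with $S$ the $p$-Sylow subgroup and $T$ the prime-to-$p$ part, I have $p^f=\e(S)$ and $A=\e(T)$. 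Since the kernel above is a $p$-group, reduction modulo $p$ carries $T$ isomorphically onto the prime-to-$p$ part of $E(\ZZ/p\ZZ)$ (the restriction is injective as $T$ has no $p$-torsion, and the orders match), so $A$ is the exponent of the prime-to-$p$ part of $E(\ZZ/p\ZZ)$ in both cases.

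For part (a), the hypothesis $a_p\not\equiv1\pmod p$ gives $p\nmid\#E(\ZZ/p\ZZ)=p+1-a_p$, so the $p$-part of $\#E(\ZZ/p^i\ZZ)=p^{i-1}(p+1-a_p)$ is exactly $p^{i-1}$. Hence $S$ coincides with the cyclic kernel $p\ZZ/p^i\ZZ\cong\ZZ/p^{i-1}\ZZ$, which yields $f=\ord_p(n)-1$. Moreover $T$ is all of $E(\ZZ/p\ZZ)$, its order being prime to $p$, so $A=\e(E(\ZZ/p\ZZ))$ divides $\#E(\ZZ/p\ZZ)=p+1-a_p$, as claimed.

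For part (b), the anomalous hypothesis $a_p\equiv1\pmod p$ together with the Hasse--Weil bound forces $p$ to exactly divide $\#E(\ZZ/p\ZZ)$, so now $|S|=p^i$ and I must pin down the structure of $S$. Here $S$ is an abelian $p$-group of order $p^i$ containing the cyclic kernel $\ZZ/p^{i-1}\ZZ$, and its $p$-torsion is $S[p]=E(\ZZ/p^i\ZZ)[p]$, which by Remark~\ref{remark:exactseq} is either $\ZZ/p\ZZ$ or $(\ZZ/p\ZZ)^2$. In the noncanonical case $S[p]\cong\ZZ/p\ZZ$, so $S$ is cyclic, forcing $S\cong\ZZ/p^i\ZZ$ and $f=\ord_p(n)$. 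In the canonical case $S$ has rank $2$, say $S\cong\ZZ/p^a\ZZ\times\ZZ/p^b\ZZ$ with $a\ge b\ge1$ and $a+b=i$; since $S$ contains a cyclic subgroup of order $p^{i-1}$ we must have $a\ge i-1$, whence $a=i-1$, $b=1$, and $f=\ord_p(n)-1$. Finally $A$ equals the exponent of the prime-to-$p$ part of $E(\ZZ/p\ZZ)$, of order $(p+1-a_p)/p$; a short Hasse--Weil computation (for $p\ge7$ the anomaly means $a_p=1$ and $\#E(\ZZ/p\ZZ)=p$, while the cases $p=3,5$ are checked by hand) shows this order divides $2$, giving $A\in\{1,2\}$.

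I expect the structural step in part (b)---deducing $S\cong\ZZ/p^{i-1}\ZZ\times\ZZ/p\ZZ$ in the canonical case from the presence of the cyclic kernel of order $p^{i-1}$---to be the crux, since everything else reduces to tracking the $p$-parts of the relevant orders and invoking the elementary classification of finite abelian $p$-groups by the rank of their $p$-torsion.
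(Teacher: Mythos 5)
Your proof is correct and takes essentially the same route as the paper: both arguments rest on the exact sequence of Remark~\ref{remark:exactseq}, isolate the prime-to-$p$ part to get $A\mid p+1-a_p$ (with $A\le2$ in the anomalous case via Hasse--Weil), and determine $f$ from the rank of $E(\ZZ/p^{i}\ZZ)[p]$, which is precisely the canonical/noncanonical dichotomy. Your Sylow-decomposition phrasing merely makes explicit the splitting-of-the-extension step that the paper states directly, so the two proofs differ only in presentation.
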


\begin{proof}
To ease notation, let~$i=\ord_p(n)$. We use the exact sequence
\begin{equation}
  \label{eqn:lesforE}
  0 \longrightarrow \frac{p\ZZ}{p^i\ZZ}
    \longrightarrow E(\ZZ/p^i\ZZ) \longrightarrow E(\ZZ/p\ZZ)
    \longrightarrow 0
\end{equation}
as described in Remark~\ref{remark:exactseq}.
\par
Suppose first that $a_p\not\equiv1\pmod{p}$. It follows that
\[
  \#E(\ZZ/p\ZZ)=p+1-a_p\not\equiv0\pmod{p},
\]
so the exponent of~$E(\ZZ/p^i\ZZ)$ has the form~$p^{i-1}A$ for some~$A$
dividing~$p+1-a_p$. This completes the proof of~(a).
\par
We now suppose that~$a_p\equiv1\pmod{p}$, so $\#E(\ZZ/p\ZZ)=Ap$.
The Hasse--Weil estimate gives
\[
  A = \frac{p+1-a_p}{p}
  \le \frac{p+1+2\sqrt{p}}{p}
  = \left(1+\frac{1}{\sqrt{p}}\right)^2.
\]
Since~$p\ge3$, we see that~$A\le 2$, so $p\nmid A$; and if $p\ge7$,
then~$A$ must equal~$1$. In any case, we have $A\mid p+1-a_p$.
\par
It follows from the exact sequence~\eqref{eqn:lesforE} that the
exponent of~$E(\ZZ/p^i\ZZ)$ is given by
\[
  \e\bigl(E(\ZZ/p^i\ZZ)\bigr)
  = \begin{cases}
    Ap^i&\text{if the sequence~\eqref{eqn:lesforE} does not split,}\\
    Ap^{i-1}&\text{if the sequence~\eqref{eqn:lesforE} does split.}\\
  \end{cases}
\]
Further, since~\eqref{eqn:lesforE} is (essentially) the extension of
a cyclic group of order~$p^{i-1}$ by a cyclic group of order~$p$, we
see that it splits if and only if~$E(\ZZ/p^i\ZZ)$ has a $p$-torsion
point that does not map to~$0$ in~$E(\ZZ/p\ZZ)$. In other words,
\begin{align*}
  \text{the sequence~\eqref{eqn:lesforE} splits}
  &\quad\Longleftrightarrow\quad
  E(\ZZ/p^i\ZZ)[p]\cong \ZZ/p\ZZ\times\ZZ/p\ZZ\\
  &\quad\Longleftrightarrow\quad
  \text{$E$ is $p^i$-canonical.}
\end{align*}
This observation completes the proof of~(b).
\end{proof}

\section{Elliptic Korselt Numbers of the Form $pq$}
\label{section:pq}
It is an easy consequence of the Korselt criterion that a classical
Carmichael number must be a product of at least three (distinct odd)
primes. This is not true for elliptic Korselt numbers, as seen in the
examples in Section~\ref{section:numericalexamples}. However, elliptic
Korselt numbers of the form~$n=pq$ do satisfy some restrictions, as
in the following result.

\begin{proposition}
\label{proposition:korseltpq}
Let~$E/\QQ$ be an elliptic curve, and let $n=pq$ be a Type~I elliptic
Korselt number for~$E$ that is a product of two distinct primes, say
with $p<q$. Then one of the following is true:
\begin{parts}
\Part{(i)}
$p\le 17$.
\Part{(ii)}
$a_p=a_q=1$, i.e., both $p$ and $q$ are anomalous primes for~$E$.
\Part{(iii)}
$p\ge \sqrt{q}$.
\end{parts}
\end{proposition}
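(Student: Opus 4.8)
The plan is to exploit that $n=pq$ is square-free, so that $a_n=a_pa_q$ by multiplicativity of the $L$-series coefficients, and the Type~I conditions~\eqref{eqn:defKorselt1}--\eqref{eqn:defKorselt2} collapse to the two divisibilities $M_p\mid N$ and $M_q\mid N$, where I abbreviate $M_p=p+1-a_p$, $M_q=q+1-a_q$, and $N=pq+1-a_pa_q$. For square-free $n$ the order condition~\eqref{eqn:defKorselt2} is vacuous at a non-anomalous prime and merely records $a_p\equiv1\pmod p$ at an anomalous one, so I would clear away that bookkeeping at the outset. I may then assume that~(iii) fails, i.e.\ $q>p^2$, and that~(i) fails, i.e.\ $p\ge19$, the goal being to force~(ii). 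Since $p<q$, the binding constraint is $M_q\mid N$: here $M_q\approx q$ while $N\approx pq$, so only about $p$ multiples of $M_q$ are available.

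The key computation is to reduce $N$ modulo $M_q$. From $q\equiv a_q-1\pmod{M_q}$ one obtains
\[
  N\;\equiv\;a_q(p-a_p)-(p-1)\;=:\;S\pmod{M_q},
\]
so that $M_q\mid S$, and everything hinges on whether $S$ vanishes. Writing $j=S/M_q\in\ZZ$, the relation $jM_q=S$ solves to $a_q=\bigl(jq+p+j-1\bigr)/(p+j-a_p)$, and feeding this into $|a_q|\le2\sqrt q$ shows that $j$ is small, and is forced to be $0$ once $q$ is large enough relative to $p^2$.

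If $S=0$, then $a_q(p-a_p)=p-1$. Setting $e=p-a_p=M_p-1$, the Hasse--Weil bound $|a_p|\le2\sqrt p$ gives $e\in[\,p-2\sqrt p,\;p+2\sqrt p\,]$, while $e\mid p-1$ forces $e\le p-1$. The second largest divisor of $p-1$ is at most $(p-1)/2$, and the inequality $p-4\sqrt p+1>0$—valid for all primes $p\ge17$—separates $(p-1)/2$ from the interval containing $e$, since it is equivalent to $(p-1)/2<p-2\sqrt p$. Hence the only admissible value is $e=p-1$, i.e.\ $a_p=1$, and therefore $a_q=1$ as well: this is conclusion~(ii), with the finitely many remaining small primes absorbed into conclusion~(i). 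This part is clean, and the constant $17$ already makes its appearance here.

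The case $S\ne0$ is where I expect the real difficulty. Here $M_q\mid S$ forces $M_q\le|S|$, and combining $|S|\le 2\sqrt q\,(p-a_p)+(p-1)$ with $M_q\ge(\sqrt q-1)^2$ yields an explicit inequality between $p$ and $q$. Crudely this only gives $p\gtrsim\tfrac12\sqrt q$, so it cleanly forces $S=0$ (hence~(ii)) when $q$ exceeds roughly $4p^2$, but it leaves the boundary band $p^2<q\lesssim4p^2$—exactly the region where $p$ sits just below $\sqrt q$—uncovered. The factor-$2$ loss comes entirely from the estimate $|a_q|\le2\sqrt q$, and closing it to the sharp threshold is the crux: it requires a finite analysis of the few surviving nonzero values of $j$, feeding the forced value of $a_q$ back through the Hasse bound together with the auxiliary divisibility $M_p\mid N$ (which, modulo $M_p$, constrains $a_p(2-a_p-a_q)$ when $j=1$). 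I anticipate that this boundary bookkeeping, rather than the main congruence or the $S=0$ Diophantine step, is the delicate heart of the argument and the true source of the precise constant.
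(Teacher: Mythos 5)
Your reduction of $N=pq+1-a_pa_q$ modulo $M_q=q+1-a_q$ to $S=a_q(p-a_p)-(p-1)$ is exactly the paper's first step, and your treatment of the branch $S=0$ is correct and complete: where the paper rearranges $S=0$ into $(p-a_p)(a_q-1)=a_p-1$ and derives the contradiction $p\le 2|a_p|+1\le 4\sqrt{p}+1$ for $p>17$, you instead note that $e=p-a_p$ divides $p-1$ while Hasse--Weil pins $e$ to $[\,p-2\sqrt{p},\,p+2\sqrt{p}\,]$, an interval containing no proper divisor of $p-1$ once $(p-1)/2<p-2\sqrt{p}$, i.e.\ once $p\ge17$. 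That is a genuinely different, arguably cleaner, route to $a_p=a_q=1$. (Your aside that for square-free $n$ condition~\eqref{eqn:defKorselt2} ``merely records'' anomaly is slightly off --- at an anomalous $p\mid n$ it demands $p\mid a_n-1$ --- but this is harmless, since you, like the paper, use only~\eqref{eqn:defKorselt1}.) The problem is the branch $S\ne0$, which is the main case: it is the one that produces conclusion~(iii), and you do not prove it. From $M_q\le|S|$ you candidly obtain only $p\gtrsim\tfrac12\sqrt{q}$, observe that the band $p^2<q\lesssim 4p^2$ remains uncovered, and ``anticipate'' a finite analysis of the nonzero values of $j=S/M_q$ without carrying it out. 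Since (iii) asserts precisely $p\ge\sqrt{q}$, that band is the entire content of the proposition here; under your contrapositive framing (assume $p\ge19$ and $q>p^2$, force~(ii)), a value of $q$ in $(p^2,4p^2]$ lets you conclude nothing at all. Naming a step ``the delicate heart of the argument'' is not performing it, so the proposal has a genuine gap.

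For comparison, the paper closes this case with no $j$-by-$j$ analysis: from $q+1-a_q\le|S|$ and Hasse--Weil it writes $q+1-2\sqrt{q}\le p\sqrt{q}+\sqrt{pq}+(p-1)$, treats this as a quadratic inequality in $\sqrt{p}$, and solves it exactly to get $\sqrt{p}\ge\bigl(\sqrt{4q^{3/2}-3q+8}-\sqrt{q}\,\bigr)\big/\bigl(\sqrt{q}+1\bigr)$, which exceeds $\sqrt[4]{q}$ for all $q\ge13$, whence $p\ge\sqrt{q}$. Note, though, that your factor-of-two worry points at exactly this display: a blunt application of $|a_q|\le2\sqrt{q}$ and $|a_p|\le2\sqrt{p}$ to $S$ gives $2p\sqrt{q}+4\sqrt{pq}+(p-1)$ on the right, and rerunning the paper's quadratic computation with those coefficients yields asymptotically only $p\gtrsim\sqrt{q}/2$ --- i.e.\ your crude bound. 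So to finish your proof you must either justify the sharper coefficients appearing in the paper's inequality (which as printed correspond to $|a_q|\le\sqrt{q}$, $|a_p|\le\sqrt{p}$) or actually execute the finite analysis of small $|j|$, feeding $a_q=(jq+p+j-1)/(p+j-a_p)$ back through Hasse--Weil and, if needed, the second divisibility $M_p\mid N$. Diagnosing where the precise constant comes from is a correct instinct, but the computation you postponed is the theorem.
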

\begin{proof}
We assume that $p>17$ and that at least one of~$a_p$ and~$a_q$ is not
equal to~$1$, and we will prove that~$p$ satisfies the estimate
in~(iii).  We have
\[
  n+1-a_n 
  = pq+1-a_pa_q
  = p(q+1-a_q) + pa_q - p - a_pa_q + 1.
\]
Thus the Korselt condition $q+1-a_q\mid n+1-a_n$ implies that
\begin{equation}
  \label{eqn:q1aqdivpaqpapaq1}
  q+1-a_q \mid pa_q- p - a_pa_q + 1.
\end{equation}
We consider two cases. 
\par
First, suppose that $pa_q- p - a_pa_q + 1=0$. A little bit of algebra
yields
\[
  (p-a_p)(a_q-1)=a_p-1.
\]
We have $p\ne a_p$, since $p\ge5$ by assumption, so $a_p=1$ if and only
if~$a_q=1$. We're also assuming that they are not both equal to~$1$, so
neither is equal to~$1$ and we can solve for~$p$,
\[
  p = a_p + \frac{a_p-1}{a_q-1}.
\]
But then
\[
  p \le |a_p| + \left|\frac{a_p-1}{a_q-1}\right|
  \le |a_p| + |a_p-1|
  \le 2|a_p|+1
  \le 4\sqrt{p}+1.
\]
This contradicts~$p>17$, so we conclude that
$pa_q- p - a_pa_q + 1\ne0$.
\par
It then follows from the Korselt divisibility
condition~\eqref{eqn:q1aqdivpaqpapaq1} that
\[
  |q+1-a_q| \le |pa_q- p - a_pa_q + 1|.
\]
Using the Hasse--Weil estimate for~$a_p$ and~$a_q$, this gives
\[
  q+1-2\sqrt{q}
  \le p\sqrt{q}+\sqrt{pq}+(p-1).
\]
Treating this as a quadratic inequality for~$\sqrt{p}$, we find that
\begin{equation}
  \label{eqn:sqrtpge}
  \sqrt{p} \ge \frac{\sqrt{4q^{3/2}-3q+8}-\sqrt{q}}{\sqrt{q}+1}.
\end{equation}
Asymptotically this gives $\sqrt{p}\ge2\sqrt[4]{q}$, and a little bit
of calculus shows that the right-hand side of~\eqref{eqn:sqrtpge} 
is larger than~$\sqrt[4]{q}$ for all~$q\ge13$. Squaring, we find that
\[
  p \ge \sqrt{q}
  \qquad\text{for all $q\ge13$.}
\]
Since we are assuming that $q>p>17$, this proves property~(iii), which
completes the proof of Proposition~\ref{proposition:korseltpq}.
\end{proof}

\section{Numerical Examples}
\label{section:numericalexamples}

In this section we present several numerical examples of elliptic
Carmichael and elliptic Korselt numbers. These examples were computed using
PARI-GP~\cite{PARI}.

\begin{example}
\label{example:y2x3x3}
Let~$E$ be the elliptic curve
\[
  E:y^2 = x^3 + x + 3.
\]
Its discriminant is $\D_E=-2^4\cdot13\cdot19$. The curve~$E$ has six
Korselt (and hence Carmichael) numbers smaller than~1000.  They are
described in Table~\ref{table:korselt00013}. In particular, note that
the table contains elliptic Korselt (hence Carmichal) numbers
$245=5\cdot7^2$ and $875=5^3\cdot7$ that are not square-free;
cf.\ Remark~\ref{remark:ellcarmnotsqfr}.
\end{example}

\begin{table}
\[
\begin{array}{|c|c|c|c|}  \hline
  n & n+1-a_n & p & p+1-a_p \\ \hline
  15=3\cdot 5 & 16=2^{4}
     & 3 & 4=2^{2}  \\
     & & 5 & 4=2^{2}  \\ \hline
  77=7\cdot 11 & 90=2\cdot 3^{2}\cdot 5
     & 7 & 6=2\cdot 3  \\
     & & 11 & 18=2\cdot 3^{2}  \\ \hline
  203=7\cdot 29 & 216=2^{3}\cdot 3^{3}
     & 7 & 6=2\cdot 3  \\
     & & 29 & 36=2^{2}\cdot 3^{2}  \\ \hline
  245=5\cdot 7^{2} & 252=2^{2}\cdot 3^{2}\cdot 7
     & 5 & 4=2^{2}  \\
     & & 7 & 6=2\cdot 3  \\ \hline
  725=5^{2}\cdot 29 & 720=2^{4}\cdot 3^{2}\cdot 5
     & 5 & 4=2^{2}  \\
     & & 29 & 36=2^{2}\cdot 3^{2}  \\ \hline
  875=5^{3}\cdot 7 & 900=2^{2}\cdot 3^{2}\cdot 5^{2}
     & 5 & 4=2^{2}  \\
     & & 7 & 6=2\cdot 3  \\ \hline
\end{array}
\]
\caption{Type~I Elliptic Korselt numbers for $E:y^2=x^3+x+3$}
\label{table:korselt00013}
\end{table}

\begin{example}
Let~$E$ be the elliptic curve
\begin{equation}
  \label{eqn:Ey2x37x3}
  E:y^2 = x^3 + 7x + 3.
\end{equation}
It has discriminant $\D_E=-25840=-2^{4}\cdot 5\cdot 17\cdot 19$ and
conductor $N=25840$.  It is curve~25840w in Cremona's tables, which
also tell us that its rank is exactly~1. This curve~$E$ has no Type~I
Korselt numbers smaller than~$25000$. We do not know why this is true,
since the curves $y^2=x^3+ax+b$ with
$(a,b)\in\{(6,3),(8,3),(7,2),(7,4)\}$ have lots of Type~I Korselt
numbers smaller than~$10000$. The first few Type~I Korselt
numbers for the curve~\eqref{eqn:Ey2x37x3} are
\[
  \{27563,\; 29711,\; 30233,\; 41683,\; 
  43511,\; 62413,\; 68783,\; 80519,\; 95207\}.
\]
We also mention that this curve has $E(\QQ)_\tors=0$.
\end{example}

\begin{example}
Let~$E$ be the elliptic curve
\[
  E:y^2+xy+3y=x^3+2x^2+4x.
\]
Then there are exactly six numbers $n\le5000$ that are Type~I elliptic
Korselt numbers for~$E$, as described in
Table~\ref{table:korselt12340}. Extending the search, there are~20
Type~I elliptic Korselt numbers for~$E$ that are smaller
than~$100000$,
\begin{multline*}
  \{65, 143, 533, 1991, 4179, 4921, 5251, 5611, 7429, 15839, 22939,
  32339, \\35165, 35303, 41495, 48719, 56959, 69475, 83839, 98879\}.
\end{multline*}
Extending the search up to~$200000$ yields three more examples,
\[
  \{105083,161551, 166493\}.
\]
The non-square-free numbers in this list are
\[
   69475 = 5^2 \cdot 7 \cdot 397,\qquad
   83839 = 7^2 \cdot 29 \cdot 59,\qquad
  161551 = 13 \cdot 17^2 \cdot 43.
\]
\end{example}

\begin{table}
\[
\begin{array}{|c|c|c|c|}  \hline
  n & n+1-a_n & p & p+1-a_p \\ \hline
  65=5\cdot 13 & 54=2\cdot 3^{3}
     & 5 & 9=3^{2}  \\
     & & 13 & 18=2\cdot 3^{2}  \\ \hline
  143=11\cdot 13 & 144=2^{4}\cdot 3^{2}
     & 11 & 12=2^{2}\cdot 3  \\
     & & 13 & 18=2\cdot 3^{2}  \\ \hline
  533=13\cdot 41 & 486=2\cdot 3^{5}
     & 13 & 18=2\cdot 3^{2}  \\
     & & 41 & 54=2\cdot 3^{3}  \\ \hline
  1991=11\cdot 181 & 1992=2^{3}\cdot 3\cdot 83
     & 11 & 12=2^{2}\cdot 3  \\
     & & 181 & 166=2\cdot 83  \\ \hline
  4179=3\cdot 7\cdot 199 & 4180=2^{2}\cdot 5\cdot 11\cdot 19
     & 3 & 4=2^{2}  \\
     & & 7 & 10=2\cdot 5  \\
     & & 199 & 190=2\cdot 5\cdot 19  \\ \hline
  4921=7\cdot 19\cdot 37 & 4950=2\cdot 3^{2}\cdot 5^{2}\cdot 11
     & 7 & 10=2\cdot 5  \\
     & & 19 & 22=2\cdot 11  \\
     & & 37 & 45=3^{2}\cdot 5  \\ \hline
\end{array}
\]
\caption{Elliptic Korselt numbers for $E:y^2+xy+3y=x^3+2x^2+4x$}
\label{table:korselt12340}
\end{table}

\subsection*{Acknowledgements}
I would like to thank Felipe Voloch and \'Alvaro Lo\-za\-no-Robledo for
the observation in Remark~\ref{remark:canlift}.  The
research described in in this note was partly supported by the NSF
(grant no. DMS-0854755).





\end{document}